\newtheorem{thm}{Theorem}[section]
\newtheorem{prop}[thm]{Proposition}
\newtheorem{cor}[thm]{Corollary}
\newtheorem{con}[thm]{Conjecture}
\newtheorem{lem}[thm]{Lemma}
\newtheorem{prob}[thm]{Problem}
\theoremstyle{remark}
\newtheorem{rem}[thm]{Remark}
\newtheorem{ex}[thm]{Example}
\newcommand{\N}{\mathbb{N}}
\newcommand{\R}{\mathbb{R}}
\newcommand{\Z}{\mathbb{Z}}
\newcommand{\SO}{\mathrm{\SO}}
\newcommand{\SL}{\mathrm{SL}}
\title[On a problem of Hopf]
{On a problem of Hopf for circle bundles over\\ aspherical manifolds with hyperbolic fundamental groups}
\author{Christoforos Neofytidis}
\address{Department of Mathematics, Ohio State University, Columbus, OH 43210, USA}
\email{neofytidis.1@osu.edu}
\date{\today}
\subjclass[2010]{55M25, 57M05, 57M10, 57N65}
\keywords{Hopf property, degree of self-map, homotopy equivalence, aspherical manifold, circle bundle, fundamental group, hyperbolic group}
\begin{document}

\begin{abstract}

We prove that a circle bundle over a closed oriented aspherical manifold with hyperbolic fundamental group admits a self-map of absolute degree greater than one if and only if it is virtually trivial. This generalizes in every dimension the case of circle bundles over hyperbolic surfaces, for which the result was known by the
 work of Brooks and Goldman on the Seifert volume. As a consequence, we verify the following strong version of a problem of Hopf  for the above class of manifolds: Every self-map of non-zero degree of a  
circle bundle over a closed oriented aspherical manifold with hyperbolic fundamental group is either homotopic to a homeomorphism or homotopic to a non-trivial covering and the bundle is virtually trivial. 
As another application, we derive the first examples of non-vanishing numerical invariants that are monotone with respect to the mapping degree on non-trivial circle bundles over aspherical manifolds with hyperbolic fundamental groups in any dimension.
\end{abstract}

\maketitle

\section{Introduction}

A long-standing question of Hopf (cf. Problem 5.26 in Kirby's list~\cite{Kirby}) asks the following: 

\begin{prob}{\normalfont (Hopf).}
\label{Hopfprob}
Given a closed oriented manifold $M$, is every self-map $f\colon M\longrightarrow M$ of degree $\pm 1$ a homotopy equivalence?
\end{prob}

A complete solution to Hopf's problem seems to be currently out of reach. Nevertheless, some affirmative answers are known for certain classes of manifolds and dimensions, most notably for simply connected manifolds (by Whitehead's theorem), for manifolds of dimension at most four with Hopfian fundamental groups~\cite{Haus} (recall that a group is called Hopfian if every surjective endomorphism is an isomorphism), and for aspherical manifolds with hyperbolic fundamental groups (e.g. negatively curved manifolds). The latter groups are Hopfian~\cite{Ma,Sela}, thus, the asphericity assumption together with the simple fact that any map of degree $\pm 1$ is $\pi_1$-surjective, answer in the affirmative Problem \ref{Hopfprob} for closed aspherical manifolds with hyperbolic fundamental groups.  

In fact, the assumption about degree $\pm 1$ is unnecessary in answering in the affirmative Problem \ref{Hopfprob} for aspherical manifolds with non-elementary hyperbolic fundamental groups, because 
those manifolds cannot admit self-maps of degree other than $\pm1$ or zero~\cite{BHM,Sela1,Sela,Min,Min1}; cf. Section~\ref{ss:hyperbolicmappings}. Hence, every self-map of non-zero degree of a closed oriented aspherical manifold with non-elementary hyperbolic fundamental group  
is a homotopy equivalence. Of course, the latter statement does not hold
for all (aspherical) manifolds, because, for example, the circle admits self-maps of any degree. Nevertheless, every self-map of the circle of degree greater than one is homotopic to a (non-trivial) covering. The same is true for every self-map of nilpotent manifolds~\cite{Bel} and for certain solvable mapping tori of homeomorphisms of the $n$-dimensional torus~\cite{Wangorder,Neoorder}. In addition, every 
non-zero degree self-map of a $3$-manifold $M$ is either a homotopy equivalence or homotopic to a covering map, unless the fundamental group of each prime summand of $M$ is finite or cyclic~\cite{Wang}. 
The above results suggest the following question for aspherical manifolds:

\begin{prob}[Strong version of Hopf's problem for aspherical manifolds]\label{Hopfstrong}
Is every non-zero degree self-map of a closed oriented aspherical manifold either a homotopy equivalence or homotopic to a non-trivial covering? 
\end{prob}

In dimension three, hyperbolic manifolds and manifolds containing a hyperbolic piece in their JSJ decomposition do not admit any self-map of degree greater than one\footnote{Equivalently, of absolute degree greater than one, by taking $f^2$ whenever $\deg(f)<-1$.} due to the positivity of the simplicial volume~\cite{Gromov}. (Recall that the simplicial volume $\|\cdot\|$ satisfies $\|M'\|\geq|\deg(f)|\cdot\|M\|$ for every map $f\colon M'\longrightarrow M$.) The other classes of aspherical $3$-manifolds which do not admit self-maps of degree greater than one 
are $\widetilde{SL_2}$-manifolds~\cite{BG} and graph manifolds~\cite{DW2}, since those manifolds have another (virtually) positive invariant that is monotone with respect to mapping degrees, namely the Seifert volume (introduced in~\cite{BG} by Brooks and Goldman).
In particular, non-trivial circle bundles over closed 
hyperbolic surfaces (which are modeled on the $\widetilde{SL_2}$ geometry) do not admit self-maps of degree greater than one. 
At the other end, it is clear that trivial circle bundles over (hyperbolic) surfaces, i.e. products $S^1\times \Sigma$, admit self-maps of any degree (and those maps are either homotopy equivalences or homotopic to non-trivial coverings~\cite{Wang}). 

Recall that a circle bundle $M\stackrel{\pi}\longrightarrow N$ is classified by its Euler class $e\in H^2(N;\Z)$; in particular, $M$ is virtually trivial if and only if $e$ is torsion.  
For a circle bundle $M$ over a closed oriented surface $\Sigma$, its Euler class $e\in H^2(\Sigma)=\Z$ is either zero and the bundle is trivial (i.e. $M=S^1\times \Sigma$) or $e$ is not zero and non-torsion and the bundle is not virtually trivial. The main result of this paper is that the non-existence of self-maps of degree greater than one on non-trivial circle bundles over closed oriented hyperbolic surfaces (i.e. over closed oriented aspherical $2$-manifolds with hyperbolic fundamental groups) can be extended in any dimension. In fact, we prove the following stronger statement:

\begin{thm}\label{t:main}
An oriented circle bundle over a closed oriented aspherical manifold with hyperbolic fundamental group admits a self-map of absolute degree greater than one if and only if it is virtually trivial.
\end{thm}

The ``if" direction holds more generally without any assumption on the hyperbolicity of the fundamental group of the base: 

\begin{ex}\label{e:torsion}
Let $M$ be a virtually trivial oriented circle bundle over a closed oriented manifold $N$. Then its Euler class $e\in H^2(N)$ is $k$-torsion for some $k$. Since $M$ is fiberwise oriented, $M$ is a principal $\mathrm{U}(1)$-bundle and thus $M$ can be viewed as the (associated) complex line bundle whose first Chern class is the Euler class $e$. Consider the tensor product $M\otimes\cdots\otimes M$ of $k+1$ copies of $M$. Then the first Chern class of $M\otimes\cdots\otimes M$ is
\[
c_1(M\otimes\cdots\otimes M)=(k+1)c_1(M)=c_1(M),
\]
and so $M\otimes\cdots\otimes M$ is isomorphic to $M$. Taking the $k+1$ power of a section of $M$ gives us a fiberwise map
\[
f\colon M\to M\otimes\cdots\otimes M,
\]
which has degree $k+1$ on the fibers and degree one on the base $N$. Thus $\deg(f)=k+1$.
\end{ex}

\subsection*{Outline of the proof of the main theorem}

In view of Example \ref{e:torsion}, the proof of Theorem \ref{t:main} amounts in showing that if an oriented circle bundle $M$ over a closed oriented aspherical manifold $N$ with $\pi_1(N)$ hyperbolic admits a self-map $f$ of degree greater than one, then $M$ must be virtually trivial. 
We will show that such $f$ is in fact homotopic to a fiberwise non-trivial self-covering of $M$, and thus the powers of $f$ induce a purely decreasing sequence 
\begin{equation}\label{eq.sequenceintro}
\pi_1(M)\supsetneq f_*(\pi_1(M))\supsetneq\cdots\supsetneq f^m_*(\pi_1(M))\supsetneq f^{m+1}_*(\pi_1(M))\supsetneq\cdots.  
\end{equation}
Using this sequence, we will be able to obtain an infinite index subgroup of $\pi_1(M)$ given by
\begin{equation*}
G:=\mathop{\cap}_{m}f^m_*(\pi_1(M)).
\end{equation*}
The last part of the proof uses the concept of groups infinite index presentable by products (IIPP) and characterizations of groups fulfilling this condition~\cite{NeoIIPP}. More precisely, we will see that the multiplication map
\begin{equation*}
\varphi\colon C(\pi_1(M))\times G\longrightarrow \pi_1(M)
\end{equation*}
defines a presentation by products for $\pi_1(M)$, where both $G$ and the center $C(\pi_1(M))$ have infinite index in $\pi_1(M)$. This will lead us to the conclusion that $\pi_1(M)$ has a finite index subgroup isomorphic to a product and $M$ is virtually trivial.

\begin{rem}
In the proof of Theorem \ref{t:main} we will use the fact that the base is an aspherical manifold which does not admit self-maps of degree greater than one, and its fundamental group is Hopfian with trivial center. Thus we can extend Theorem \ref{t:main} (and its consequences, cf. Section \ref{s:applications}) to any circle bundle over a closed oriented manifold $N$ that fullfils the aforementioned properties. For instance, if $N$ is an
 irreducible locally symmetric space of non-compact type, then it is aspherical, it has positive simplicial volume~\cite{LS,Bu} (and thus does not admit self-maps of degree greater than one), and $\pi_1(N)$ is Hopfian~\cite{Ma} without center~\cite{Ra}. 
\end{rem}

\begin{rem}
A decreasing sequence (\ref{eq.sequenceintro}) exists whenever an aspherical manifold $M$ admits a self-map $f$ of degree greater than one and $\pi_1(\overline{M})$ is Hopfian for every finite cover $\overline{M}$ of $M$ (which is conjectured to be true, cf. Section \ref{s:applications}). This gives further evidence towards an affirmative answer to Problem \ref{Hopfstrong}, since the existence of such a sequence is a necessary condition for $f$ to be homotopic to a non-trivial covering.
Now, every finite index subgroup of the fundamental group of a circle bundle over a closed aspherical manifold with hyperbolic fundamental group is indeed Hopfian and therefore this gives us an alternative way of obtaining sequence (\ref{eq.sequenceintro}). We will discuss the Hopf property for those circle bundles  
and Problem \ref{Hopfstrong} more generally in Section \ref{s:Hopfdiscussion}.
\end{rem}

\subsection*{Acknowledgments}

I would like to thank Michelle Bucher, Pierre de la Harpe, Jean-Claude Hausmann, Wolfgang L\"uck, Jason Manning, Dennis Sullivan and Shmuel Weinberger for useful comments and discussions. 
I am especially thankful to Wolfgang L\"uck for suggesting to extend the results of a previous version of this paper to circle bundles over aspherical manifolds with hyperbolic fundamental groups. Also, I am grateful to an anonymous referee for suggesting Example \ref{e:torsion}, which pointed out a mistake in a previous version of this paper.
The support of the Swiss National Science Foundation is gratefully acknowledged.

\section{Applications of the main result}\label{s:applications}

Before proceeding to the proof of Theorem \ref{t:main}, let us mention a few consequences of Theorem \ref{t:main} or of parts of its proof.

It is a long-standing question (motivated by Problem \ref{Hopfprob}) whether the fundamental group of every closed aspherical manifold is Hopfian (see~\cite{Neu} for a discussion). If this is true, then every self-map of an aspherical manifold of degree $\pm 1$ is a homotopy equivalence. 
In the course of the proof of Theorem \ref{t:main}, we will see that every self-map of a circle bundle over a closed oriented aspherical  
manifold with hyperbolic fundamental group is homotopic to a fiberwise covering map, and this alone shows that Problem \ref{Hopfprob} and, in most of the cases, Problem \ref{Hopfstrong} have indeed affirmative answers for self-maps of those manifolds. More interestingly, 
Theorem \ref{t:main} implies the following complete characterization with respect to Problem \ref{Hopfstrong}: 

\begin{cor}\label{c:Hopf}
Every self-map of non-zero degree of an oriented circle bundle over a closed oriented aspherical 
manifold with hyperbolic fundamental group either is a homotopy equivalence or is homotopic to a fiberwise non-trivial covering (and to a non-trivial covering in dimensions different that four and five) and the bundle is virtually trivial.
\end{cor}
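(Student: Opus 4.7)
The plan is to reduce Corollary~\ref{c:Hopf} to Theorem~\ref{t:main} together with the structural fact, announced in the excerpt, that every self-map $f\colon M\to M$ of a circle bundle $M\to N$ of the stated type is homotopic to a fiberwise covering map. So I would start by fixing a self-map $f$ of non-zero degree, replacing it by a fiberwise cover $F$ homotopic to it, and letting $\bar{F}\colon N\to N$ denote the induced map on the base. Since $F$ restricts on each fiber to a self-cover of $S^1$, say of degree $d\ne 0$, multiplicativity of degree in the fibration gives $\deg(f)=d\cdot\deg(\bar{F})$.

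The first step is to pin down $\bar{F}$. Because $N$ is closed oriented aspherical with hyperbolic fundamental group, the results recalled in the introduction (the Hopfian property of hyperbolic groups combined with~\cite{BHM,Sela1,Sela,Min,Min1}) imply that any self-map of $N$ has degree in $\{-1,0,1\}$; since $\deg(\bar{F})\ne 0$, we conclude $|\deg(\bar{F})|=1$, so $\bar{F}$ is a self-homotopy-equivalence of $N$ and $|d|=|\deg(f)|$.

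Now I would split into cases on $|\deg(f)|$. If $|\deg(f)|=1$ then $|d|=1$, so $F$ restricts to a degree $\pm 1$ self-map of each $S^1$-fiber; applying the five-lemma to the map of long exact sequences of homotopy groups induced by $F$ between two copies of the fibration $S^1\to M\to N$ will show that $F_\ast$ is an isomorphism on $\pi_1(M)$. Since $M$ is aspherical (being an $S^1$-bundle over an aspherical base), $F$, and hence $f$, will be a homotopy equivalence. If instead $|\deg(f)|>1$, Theorem~\ref{t:main} forces the Euler class of $M\to N$ to vanish, so the bundle is a product; the fiberwise cover $F$ is then already a genuine non-trivial covering of $M$ of degree $|d|>1$, so $f$ is homotopic to a non-trivial covering.

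The only non-trivial input is the existence of the fiberwise-covering representative of $f$, which is produced inside the proof of Theorem~\ref{t:main}; the remainder of the argument is routine manipulation of the long exact sequence of the fibration together with the known degree restrictions on self-maps of $N$. In particular, no separate obstacle arises at the level of the corollary itself.
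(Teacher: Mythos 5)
Your proposal is correct and follows essentially the same route as the paper: the paper's proof of Corollary~\ref{c:Hopf} likewise combines the fiberwise-covering reduction of Proposition~\ref{p:fibercover} (which yields the dichotomy of Corollary~\ref{c:Hopfprob}) with Theorem~\ref{t:main} to conclude triviality of the bundle when $|\deg(f)|>1$. The extra details you supply (the five-lemma on the homotopy exact sequences in the degree $\pm1$ case) are a harmless elaboration of what the paper leaves implicit.
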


\begin{rem}[The Borel conjecture: From homotopy equivalences to homeomorphisms]
\label{r:homeo}
In most cases, an even stronger conclusion holds for the homotopy equivalences of Corollary \ref{c:Hopf}.
Recall that the Borel conjecture asserts that any homotopy equivalence between two closed aspherical manifolds is homotopic to a homeomorphism. (Note that the Borel conjecture does not hold in the smooth category or for non-aspherical manifolds; see for example the related references in the survey paper~\cite{Lue1} and the discussion in~\cite{Sul}.) A complete affirmative answer to the Borel conjecture is known in dimensions less than four (see again~\cite{Lue1} for a survey). Moreover,
 by~\cite{BL,BLR}, the fundamental group of a circle bundle $M$ over a closed aspherical manifold $N$ with $\pi_1(N)$ hyperbolic  
 and $\dim(N)\neq 3,4$ satisfies the Farrell-Jones conjecture, and therefore the Borel conjecture, and so every homotopy equivalence of such a circle bundle is in fact homotopic to a homeomorphism. (See also~\cite{BHM} for self-maps of the base $N$.) 
\end{rem}

Beyond the Seifert volume for non-trivial circle bundles over hyperbolic surfaces~\cite{BG}, 
no other non-vanishing monotone invariant respecting the degree seemed to be known on higher dimensional circle bundles over aspherical manifolds with hyperbolic fundamental groups
(note that the simplicial volume vanishes as well ~\cite{Gromov}). A consequence of Theorem \ref{t:main} is that such a monotone invariant exists and it is given by the domination semi-norm. Recall that the domination semi-norm is defined by
\[
\nu_M(M'):=\sup\{|\deg(f)|\ | \ f\colon M'\longrightarrow M\},
\]
and it was introduced in~\cite{CL}. 
Theorem \ref{t:main} implies the following:

\begin{cor}
If $M$ is a not virtually trivial circle bundle over a closed oriented aspherical manifold with hyperbolic fundamental group, 
then $\nu_M(M)=1$.
\end{cor}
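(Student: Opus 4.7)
The plan is to deduce this directly from Theorem \ref{t:main}, using only the definition of the domination semi-norm together with the standard squaring trick mentioned in the footnote of the introduction.

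First I would observe the lower bound $\nu_M(M)\geq 1$, which is immediate by taking $f=\id_M$; this map has degree $1$, so the supremum defining $\nu_M(M)$ is at least $1$. The content of the corollary is therefore the upper bound $\nu_M(M)\leq 1$, i.e., that no self-map $f\colon M\longrightarrow M$ has $|\deg(f)|\geq 2$. Since $M$ is a non-trivial circle bundle over a closed oriented aspherical manifold with hyperbolic fundamental group, Theorem \ref{t:main} applies: every self-map of $M$ has degree at most one. This already rules out positive degrees $\geq 2$.

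The remaining case is a self-map $f$ with $\deg(f)\leq -2$. Here I would invoke the squaring argument: the composition $f\circ f\colon M\longrightarrow M$ is again a self-map of $M$, and $\deg(f\circ f)=\deg(f)^2\geq 4$. This would violate Theorem \ref{t:main} applied to $f\circ f$. Hence no self-map of $M$ has absolute degree greater than one, giving $\nu_M(M)\leq 1$, and combined with the lower bound we conclude $\nu_M(M)=1$.

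There is essentially no obstacle: the corollary is a formal consequence of Theorem \ref{t:main} together with the elementary observation that if a self-map has large negative degree, iterating it produces a self-map of large positive degree. The only thing to be careful about is the sign convention in Theorem \ref{t:main} (degree versus absolute degree), which is precisely what the squaring step handles.
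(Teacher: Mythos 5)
Your argument is correct and is exactly the derivation the paper intends: the paper states this corollary as an immediate consequence of Theorem \ref{t:main}, with the squaring trick for negative degrees already flagged in the footnote of the introduction. The lower bound via $\id_M$ and the upper bound via Theorem \ref{t:main} applied to $f$ and $f\circ f$ together give $\nu_M(M)=1$, as you say.
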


However, the domination semi-norm is not finite in general, because $M$ might admit maps of infinitely many different degrees from another manifold $M'$. 
Nevertheless, Theorem \ref{t:main} and the non-vanishing of the Seifert volume for non-trivial circle bundles over hyperbolic surfaces suggest the following: 

\begin{con}\label{conj}
In every dimension $n$, there is a homotopy $n$-manifold numerical invariant $I_n$ satisfying the inequality $I_n(M)\geq |\deg(f)|\cdot I_n(N)$ for each map $f\colon M\longrightarrow N$, which is positive and finite on every not virtually trivial circle bundle over a closed aspherical manifold with hyperbolic fundamental group. 
\end{con}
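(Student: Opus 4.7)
The plan is to construct $I_n$ by combining the simplicial volume of the base with the Gromov seminorm of the Euler class, in a manner dictated by the monotonicity requirement and modeled on the Brooks--Goldman Seifert volume. On the class of not virtually trivial circle bundles $\pi\colon M\longrightarrow N$ with $N$ closed aspherical and $\pi_1(N)$ hyperbolic, I would define
\[
I_n^{\mathrm{circ}}(M) \;:=\; \frac{\|N\|}{\|e\|_\infty},
\]
where $\|N\|$ is Gromov's simplicial volume and $\|e\|_\infty$ is the Gromov seminorm of the Euler class $e\in H^2(N;\R)$. Mineyev's theorem on the bounded cohomology of hyperbolic groups yields both that $\|N\|>0$ and that every non-torsion class in degree $\geq 2$ has positive Gromov seminorm; since the bundle is not virtually trivial, $e$ is non-torsion, so $I_n^{\mathrm{circ}}(M)$ is positive and finite.

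For monotonicity within this class: given $f\colon M\longrightarrow M'$ of degree $d$ between two such bundles (with bases $N,N'$), a version of the fiberwise-covering structure established in the proof of Theorem \ref{t:main} (extended from self-maps to maps between distinct bundles in the class) implies that $f$ is homotopic to a fiberwise map covering some $\bar f\colon N\longrightarrow N'$ of degree $\bar d$ with fiber degree $k$, so $d=k\bar d$ and the Euler classes satisfy $\bar f^{*}e' = k\,e$. Gromov's inequality gives $\|N\|\geq |\bar d|\cdot\|N'\|$, while the norm-decreasing property of pullback on bounded cohomology gives $k\,\|e\|_\infty=\|\bar f^{*}e'\|_\infty\leq\|e'\|_\infty$. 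Combining,
\[
I_n^{\mathrm{circ}}(M) \;=\; \frac{\|N\|}{\|e\|_\infty} \;\geq\; \frac{|\bar d|\cdot \|N'\|}{\|e\|_\infty} \;\geq\; \frac{k|\bar d|\cdot\|N'\|}{\|e'\|_\infty} \;=\; |d|\cdot I_n^{\mathrm{circ}}(M').
\]

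To pass from $I_n^{\mathrm{circ}}$ to an invariant on all closed oriented $n$-manifolds, a natural attempt is
\[
I_n(M) \;:=\; \sup\bigl\{|\deg(g)|\cdot I_n^{\mathrm{circ}}(M') \;\big|\; g\colon M\longrightarrow M',\ M' \text{ a circle bundle as above}\bigr\},
\]
with the supremum set to zero if no such $g$ exists; this is monotone under non-zero degree maps by construction and dominates $I_n^{\mathrm{circ}}$ on bundles in the class, with equality on those bundles following from the self-map case handled by Theorem \ref{t:main}. The main obstacle will be showing that this supremum is finite for every $n$-manifold $M$, a question closely tied to bounding the degrees of arbitrary maps into the circle bundles of interest. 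A cleaner, more intrinsic alternative I would also pursue is to construct a bounded-cohomological invariant directly from the central extension $1\to\Z\to\pi_1(M)\to\pi_1(N)\to 1$, by pairing a bounded refinement of the Euler class with a fundamental class of $M$ via a transfer construction; this would avoid the ad hoc supremum but requires developing machinery for bounded cohomology of such extensions that is not currently available in this generality. In either direction, new technology beyond the present paper is required, reflecting the absence in dimensions above three of any analogue of the $\widetilde{SL_2}$-geometry that supplies the Seifert volume in the classical case.
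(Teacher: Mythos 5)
The statement you are proving is labeled \texttt{con} in the paper for a reason: it is an open conjecture, and the paper offers no proof of it. The only evidence the author supplies is the case $n=3$ (where the Brooks--Goldman Seifert volume does the job) together with Theorem \ref{t:main}, which only rules out self-maps of large degree. So there is no argument in the paper to compare yours against; what you have written must be judged as an attempt at an open problem.

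On its merits, your candidate $I_n^{\mathrm{circ}}(M)=\|N\|/\|e\|_\infty$ is a genuinely good guess: positivity and finiteness on the class do hold (though your attribution is slightly off --- Mineyev's surjectivity gives \emph{finiteness} of $\|e\|_\infty$ and, via Gromov's duality, positivity of $\|N\|$; positivity of $\|e\|_\infty$ for non-torsion $e$ comes instead from the Kronecker pairing with an integral $2$-cycle on which $e$ evaluates non-trivially). In dimension $3$ your quantity is proportional to the Seifert volume of a non-trivial circle bundle over a hyperbolic surface, which is a reassuring consistency check, and the within-class monotonicity computation is correct \emph{provided} you actually prove the extension of Proposition \ref{p:fibercover} from self-maps to maps between two distinct bundles in the class; you assert this, but it does need the full covering-space argument (lift to the cover corresponding to $f_*(\pi_1(M))$, use that the cover is again a bundle of the same type with infinite cyclic center, conclude $f_*(C(\pi_1(M)))\subseteq C(\pi_1(M')))$, then use the homotopy lifting property to make $f$ strictly fiber-preserving.

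The genuine gap is the one you yourself flag: the conjecture asks for an invariant of \emph{all} closed oriented $n$-manifolds, finite where it needs to be, and your supremum definition is only shown to be finite on the bundles themselves (where it collapses to $I_n^{\mathrm{circ}}$ by your within-class inequality). On an arbitrary $M$ its finiteness is precisely the finiteness of the domination semi-norms $\nu_{M'}(M)$ over all bundles $M'$ in the class --- and the paper explicitly warns that $\nu_{M'}$ is \emph{not} finite in general, exhibiting circle bundles with torsion Euler class that receive maps of infinitely many degrees. Excluding the torsion case removes that counterexample but does not establish finiteness; that is exactly the hard, open content of the conjecture, and declaring it ``the main obstacle'' does not discharge it. So what you have is a plausible reduction of the conjecture to a domination-finiteness statement (plus a worked-out lower-dimensional analogy), not a proof. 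The closing paragraph about a transfer/bounded-cohomology construction for the central extension is speculation and cannot be counted as progress as written.
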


\section{Infinite sequences of coverings}

In this section, we reduce our discussion to self-coverings of a circle bundle over a closed oriented aspherical manifold with hyperbolic fundamental group and thus obtain a purely decreasing sequence of finite index subgroups of the fundamental group of this bundle. 

\subsection{Self-maps of aspherical manifolds with hyperbolic fundamental groups}\label{ss:hyperbolicmappings}

First, we observe that the hyperbolicity of the fundamental group of the base implies strong restrictions on the possible degrees of its self-maps:

\begin{prop}{\normalfont(\cite{BHM}).}\label{p:mappinghyperbolic}
Every self-map of non-zero degree of a closed aspherical manifold with non-elementary hyperbolic fundamental group is a homotopy equivalence. 
\end{prop}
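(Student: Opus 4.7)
The plan is to combine a simplicial-volume bound on the possible degrees of a self-map with the Hopf property of hyperbolic groups and basic obstruction theory for aspherical spaces. Concretely, I would first establish $\|N\|>0$, conclude $|\deg(f)|=1$, and then promote the resulting $\pi_1$-surjection to an isomorphism realised by a homotopy equivalence.

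For positivity of the simplicial volume, the key input is Mineyev's theorem that the comparison map $H^*_b(\Gamma;\R)\to H^*(\Gamma;\R)$ is surjective in all degrees $\geq 2$ for any hyperbolic group $\Gamma$. The non-elementary assumption guarantees $n=\dim N\geq 2$ (any closed aspherical manifold of dimension at most $1$ has elementary hyperbolic $\pi_1$), so this applies in degree $n$. Asphericity gives $H^n(N;\R)=H^n(\Gamma;\R)$, hence the class dual to $[N]$ admits a bounded cocycle representative and $\|N\|>0$. Gromov's functoriality $\|N\|\geq|\deg(f)|\cdot\|N\|$, combined with $\deg(f)\neq 0$, then forces $|\deg(f)|=1$.

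For the second half, a standard covering-space argument shows that $[\Gamma:f_*\Gamma]$ is finite and divides $|\deg(f)|=1$, so $f_*$ is surjective. Since hyperbolic groups are Hopfian by Sela's theorem, $f_*$ is in fact an automorphism of $\Gamma$. Finally, because $N$ is a $K(\Gamma,1)$, self-maps of $N$ up to homotopy correspond bijectively to conjugacy classes of endomorphisms of $\Gamma$; a self-map inducing an automorphism is therefore a homotopy equivalence, which completes the proof.

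The principal obstacle is the positivity of $\|N\|$, i.e.\ Mineyev's surjectivity theorem and its application to the fundamental class of $N$; once this (highly non-trivial) input is in hand, the remainder is a brief concatenation of well-known facts.
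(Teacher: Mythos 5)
Your argument is correct and coincides with the paper's own second (alternative) proof of this proposition: Mineyev's surjectivity of the comparison map gives positive simplicial volume, forcing $|\deg(f)|=1$, after which $\pi_1$-surjectivity plus Sela's Hopf property for hyperbolic groups yields a homotopy equivalence. The paper's primary proof is instead the purely algebraic one via the co-Hopf property and the stabilization of $\ker(f_*^m)$, but it explicitly records your bounded-cohomology route as an equally valid alternative.
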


\begin{proof}
There are two ways to see this. The first one (given in~\cite{BHM}) is purely algebraic, using the co-Hopf property of torsion-free, non-elementary hyperbolic groups~\cite{Sela1,Sela}. The other way uses  bounded cohomology and the simplicial volume; cf.~\cite{Min,Min1} and~\cite{Gromov1}.

Let $N$ be a closed oriented aspherical manifold whose fundamental group is non-elementary hyperbolic and $f\colon N\longrightarrow N$ be a map of non-zero degree. By~\cite{Sela1,Sela} (see also~\cite[Lemma 4.2]{BHM}), $\pi_1(N)$ is co-Hopfian (i.e. every injective endomorphism is an isomorphism), and so by the asphericity of $N$ it suffices to show that $f_*$ is injective. 
Suppose the contrary, and let a non-trivial element $x\in \ker(f_*)$. Since $f_*(\pi_1(N))$ has finite index in $\pi_1(N)$, 
there is some $n\in\N$ such that $x^n\in f_*(\pi_1(N))$, i.e. there is some $y\in\pi_1(N)$ such that $f_*(y)=x^n$. Clearly, $x^n\neq1$, because $\pi_1(N)$ is torsion-free, and so $y\notin\ker(f_*)$. Now, $f_*^2(y)=f_*(x^n)=1$, which means that $y\in\ker(f_*^2)$. By iterating this process, we obtain a purely increasing sequence 
\[
\ker(f_*)\subsetneq \ker(f_*^2)\subsetneq\cdots\subsetneq \ker(f_*^{m}) \subsetneq \ker(f_*^{{m+1}})\subsetneq\cdots.
\]
But the latter sequence contradicts Sela's result~\cite{Sela1,Sela} that for every endomorphism $\psi$ of a torsion-free hyperbolic group, there exists $m_0\in\N$ such that $\ker(\psi^k)=\ker(\psi^{m_0})$ for all $k\geq m_0$. We deduce that $f_*$ is injective, and therefore an isomorphism as required. 
 
 Alternatively to the above argument, since $\pi_1(M)$ is non-elementary hyperbolic, the comparison map from bounded cohomology to ordinary cohomology
\[
\psi_{\pi_1(M)}\colon H_b^n(\pi_1(M);\R)\longrightarrow H^n(\pi_1(M);\R)
\]
is surjective; cf.~\cite{Min,Min1,Gromov1}.  Thus, by the duality of the simplicial $\ell^1$-semi-norm and the bounded cohomology $\ell^\infty$-semi-norm (cf.~\cite{Gromov}), we deduce that $M$ has positive simplicial volume. This implies that every non-zero degree map $f\colon M\longrightarrow M$ has degree $\pm1$. In particular, $f$ is $\pi_1$-surjective, and thus an isomorphism, because $\pi_1(M)$ is Hopfian~\cite{Ma,Sela}.
\end{proof}

\subsection{Fundamental group and finite covers}\label{ss:cover} 

Let $M\stackrel{\pi}\longrightarrow N$ be an oriented circle bundle, where $N$ is a closed oriented aspherical manifold with $\pi_1(N)$ hyperbolic. We may assume that $\dim (N)\geq2$, otherwise we deal with the well-known case of $T^2$. 
 The fundamental group of $M$ fits into the central extension (cf.~\cite{Bo,CR})
\begin{equation*}\label{eq.fundamentalgroup}
1\longrightarrow C(\pi_1(M))\longrightarrow \pi_1(M)\stackrel{\pi_*}\longrightarrow\pi_1(N) \longrightarrow 1,
\end{equation*}
where $C(\pi_1(M))=\Z$ (note that $C(\pi_1(N))=1$, because $\pi_1(N)$ is torsion-free, non-elementary hyperbolic).

It is easy to observe that every finite covering of $M$ is of the same type. More precisely:

\begin{lem}{\normalfont(\cite[Lemma 4.6]{NeoIIPP}).}\label{l:cover}
Every finite cover $\overline{M}\stackrel{p}\longrightarrow M$ is a circle bundle $\overline{M}\stackrel{\overline{\pi}}\longrightarrow \overline{N}$, where $\overline{N}\stackrel{\overline{p}}\longrightarrow N$ is a finite covering.
\end{lem}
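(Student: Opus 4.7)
The plan is to exploit the central extension
\[
1 \longrightarrow Z \longrightarrow \pi_1(M) \stackrel{\pi_*}{\longrightarrow} \pi_1(N) \longrightarrow 1
\]
from the previous subsection, with $Z = C(\pi_1(M)) \cong \Z$, and to realize the given finite cover $p \colon \overline{M} \to M$ as a composition of a base cover followed by a fibrewise self-cover. Set $H := p_*(\pi_1(\overline{M})) \leq \pi_1(M)$, a subgroup of finite index.

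First, I would observe that since $Z \cong \Z$ and $H$ has finite index in $\pi_1(M)$, the intersection $H \cap Z$ is infinite cyclic and remains central in $H$. The image $K := \pi_*(H)$ has finite index in $\pi_1(N)$, and therefore determines a finite cover $\overline{N} \to N$ with $\pi_1(\overline{N}) = K$; because $N$ is aspherical with hyperbolic fundamental group, so is $\overline{N}$. Restricting $\pi_*$ to $H$ yields the central extension
\[
1 \longrightarrow H \cap Z \longrightarrow H \longrightarrow K \longrightarrow 1.
\]

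Next, I would pull back the bundle $M \to N$ along $\overline{N} \to N$ to obtain a circle bundle $\widetilde{M} \to \overline{N}$ whose total space is a finite cover of $M$ with $\pi_1(\widetilde{M}) = \pi_*^{-1}(K) = Z \cdot H$ (the last equality uses $\pi_*(H) = K$ and $\ker \pi_* = Z$). Since $Z$ is central, $H$ is normal in $Z \cdot H$, and the cover $p$ factors as $\overline{M} \to \widetilde{M} \to M$, with $\overline{M} \to \widetilde{M}$ a regular cover of degree $[Z : Z \cap H]$ and deck group $Z/(Z \cap H)$.

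Finally, I would verify that $\overline{M} \to \widetilde{M}$ is a fibrewise cover of the bundle $\widetilde{M} \to \overline{N}$, so that the composition $\overline{M} \to \overline{N}$ inherits a circle bundle structure. For any fibre $F \cong S^1$ of $\widetilde{M} \to \overline{N}$, the components of its preimage in $\overline{M}$ are in bijection with the double cosets $H \backslash (Z \cdot H)/Z$, which consists of a single class because $Z$ is central; hence the preimage is connected, and being a finite cover of $S^1$ it is itself an $S^1$ on which the cover restricts to the standard $[Z : Z \cap H]$-fold self-cover. The main subtlety lies in this last step: one must check that the local fibrewise picture assembles globally into a bundle projection, and this is secured precisely by the centrality of $Z$ in $\pi_1(\widetilde{M})$, which forces the free $S^1$-action on $\widetilde{M}$ coming from the fibration to lift to a free $S^1$-action on $\overline{M}$ whose quotient is $\overline{N}$.
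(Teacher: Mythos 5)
Your argument is correct. Note that the paper itself gives no proof of this lemma: it is quoted from \cite[Lemma 4.6]{NeoIIPP}, so there is no internal argument to compare against; what you have written is a sound, self-contained proof of the standard kind. Setting $H=p_*(\pi_1(\overline{M}))$, the key steps --- that $H\cap Z$ is infinite cyclic and central in $H$, that $K=\pi_*(H)$ has finite index in $\pi_1(N)$ and so defines $\overline{N}$, that $\pi_*^{-1}(K)=Z\cdot H$ so that $p$ factors through the pullback bundle $\widetilde{M}\to\overline{N}$, and that the remaining cover $\overline{M}\to\widetilde{M}$ is a regular fibrewise cover with cyclic deck group $Z/(Z\cap H)$ precisely because $Z$ is central --- are all verified correctly, including the one-double-coset computation showing preimages of fibres are connected. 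Two small points deserve an explicit sentence in a polished write-up: (i) the pullback $\widetilde{M}$ is connected because $\pi_*$ is surjective, so the degree of $\widetilde{M}\to M$ equals $[\pi_1(M):\pi_*^{-1}(K)]$; and (ii) the lifting of the circle action uses that an oriented circle bundle is a principal $S^1$-bundle, and the lifted $\R$-action descends to a free action of $\R/d\Z\cong S^1$ with $d=[Z:Z\cap H]$ because the time-one map realizes the generating deck transformation of $\overline{M}\to\widetilde{M}$. Alternatively, once the preimage of each fibre is known to be connected, local triviality of $\overline{M}\to\overline{N}$ can be read off directly over a trivializing ball in $\overline{N}$, so the action-lifting step can be avoided entirely if one prefers.
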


In particular, $p$ is a generalised bundle map covering $\overline{p}$ and the (infinite cyclic) center of $\pi_1(\overline{M})$ is mapped under $p_*$ into the center of $\pi_1(M)$.

\subsection{Reduction to fiberwise covering maps}\label{covers}

Now, let $f\colon M\longrightarrow M$ be a map of non-zero degree. 
We observe that $f$ is homotopic to a fiberwise covering map:

\begin{prop}\label{p:fibercover}
$f$ is homotopic to a fiberwise covering where the induced map $f_{S^1}\colon S^1\longrightarrow S^1$ has degree $\pm\deg(f)$.
\end{prop}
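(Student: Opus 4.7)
The plan is to exploit the central extension
\[
1\to C(\pi_1(M))\to\pi_1(M)\to\pi_1(N)\to 1
\]
to reduce $f$ to a fibrewise model and then use asphericity of $M$ to conclude. First I would show that $f_*$ preserves the central subgroup $C(\pi_1(M))\cong\Z$. Since $\deg(f)\neq 0$, the subgroup $f_*(\pi_1(M))$ has finite index in $\pi_1(M)$, so $f_*(C(\pi_1(M)))$ centralises a finite-index subgroup; projecting to $\pi_1(N)$, its image centralises a finite-index, and hence non-elementary, subgroup of the torsion-free, non-elementary hyperbolic group $\pi_1(N)$. Because the centraliser of any non-elementary subgroup in a torsion-free non-elementary hyperbolic group is trivial, this projection is trivial and $f_*(C(\pi_1(M)))\subseteq C(\pi_1(M))$.

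Let $d_1\in\Z$ be such that $f_*|_{C(\pi_1(M))}$ is multiplication by $d_1$, and let $\bar f_*\colon\pi_1(N)\to\pi_1(N)$ be the induced quotient. By asphericity of $N$, $\bar f_*$ is realised by some $\bar f\colon N\to N$, and the identity $\pi_*\circ f_*=\bar f_*\circ\pi_*$ together with asphericity of $N$ give $\pi\circ f\simeq\bar f\circ\pi$. Lifting the implementing homotopy through the fibration $\pi$, I may replace $f$ by a homotopic map that strictly covers $\bar f$ as a bundle map, whose restriction to each fibre is a self-map of $S^1$ of degree $d_1$. The multiplicative degree formula for oriented fibre bundles then yields $\deg(f)=d_1\cdot\deg(\bar f)$; in particular $\deg(\bar f)\neq 0$, and Proposition \ref{p:mappinghyperbolic} forces $\bar f$ to be a homotopy equivalence of degree $\pm 1$, whence $d_1=\pm\deg(f)$.

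It remains to upgrade $f$ from a fibrewise degree-$d_1$ bundle map to an honest fibrewise $d_1$-fold covering. Applying the standard dictionary between central extensions and second cohomology to the morphism of extensions whose left, middle and right maps are multiplication by $d_1$, $f_*$ and $\bar f_*$, I obtain the cohomological identity $\bar f^*e=d_1\cdot e$ in $H^2(N;\Z)$, where $e$ is the Euler class of $\pi$. Equivalently, $\bar f^*M\to N$ is isomorphic to the circle bundle obtained from $M\to N$ by post-composing its classifying map with the $d_1$-th power on $BS^1$; composing the tautological fibrewise $d_1$-fold covering with the pullback projection produces a bundle map $g\colon M\to M$ over $\bar f$ that is a $d_1$-fold covering on each fibre. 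After twisting $g$ by a map $N\to S^1$ if needed to match the cocycle part of $f_*$ in $H^1(N;\Z)$, we arrange $g_*=f_*$ up to conjugation, and $g\simeq f$ follows from asphericity of $M$. The delicate point, and the one I expect to need most care, is the cohomological identity $\bar f^*e=d_1\cdot e$ together with its bundle-theoretic realisation as a genuine fibrewise covering; once these are in hand, both the centre-preservation at the start and the asphericity-based homotopy conclusion at the end are routine.
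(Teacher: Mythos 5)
Your proof is correct and follows the same overall route as the paper: descend $f$ to a self-map $\bar f$ of $N$, invoke Proposition \ref{p:mappinghyperbolic} to get $\deg(\bar f)=\pm1$, and read off the fibre degree from multiplicativity. Two of your sub-steps differ from the paper's in a way worth noting. For the key fact that $f_*$ preserves the centre, the paper lifts $f$ to the finite cover corresponding to $f_*(\pi_1(M))$ and uses Lemma \ref{l:cover} (that cover is again a circle bundle, so its centre maps into $C(\pi_1(M))$); you instead argue directly that $\pi_*(f_*(C(\pi_1(M))))$ centralises a finite-index, hence non-elementary, subgroup of the torsion-free hyperbolic group $\pi_1(N)$ and must therefore be trivial. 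Your argument is self-contained and avoids the covering-space lemma. Second, the paper ends with the terse assertion that a bundle map of fibre degree $d_1$ ``is homotopic to a fiberwise covering,'' whereas you justify this explicitly via the identity $\bar f^*e=d_1\cdot e$ (from the morphism of central extensions) and the resulting factorisation $M\to\bar f^*M\to M$, together with a gauge twist by a class in $H^1(N;\Z)$ to match $f_*$ up to conjugacy before applying asphericity. That extra care fills in a step the paper leaves implicit, and the Euler-class identity you isolate is exactly the right mechanism; nothing in your argument is circular or gapped.
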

\begin{proof}
Let the composite map $\pi\circ f\colon M\longrightarrow N$ and the induced homomorphism
\[
(\pi\circ f)_*\colon \pi_1(M)\longrightarrow\pi_1(N).
\]
Since the center of $\pi_1(N)$ is trivial, we derive, after lifting $f$ to a $\pi_1$-surjective map $\overline{f}\colon M\longrightarrow \overline{M}$ (where $\overline{M}\stackrel{p}\longrightarrow M$ corresponds to $f_*(\pi_1(M))$), that the center of $\pi_1(M)$ is mapped under $(\pi\circ f)_*$ to the trivial element of $\pi_1(N)$; see Lemma \ref{l:cover} and the lines above that. Thus $f$ factors up to homotopy through a self-map $g\colon N\longrightarrow N$, i.e. $\pi\circ f =g\circ\pi$ (up to homotopy).

Clearly, $\deg(g)\neq 0$, otherwise $f$ would factor through the degree zero map from the pull-back bundle of $g$ along $\pi$ to $M$, which is impossible because $\deg(f)\neq 0$. Now, Proposition \ref{p:mappinghyperbolic} implies that $g$ is a homotopy equivalence of $N$ (in particular $\deg(g)=\pm 1$). Hence, the induced map $f_{S^1}$ on the $S^1$ fiber is homotopic to a self-covering of degree
\[
\deg(f_{S^1})=\pm\deg(f).
\]
\end{proof}

Since every map of degree $\pm1$ is $\pi_1$-surjective, every self-map of $M$ of degree $\pm1$ is a homotopy equivalence, answering thus in the affirmative Problem \ref{Hopfprob}. More interestingly, the above proposition gives the following strong affirmative answer to Problem \ref{Hopfstrong}; cf. Remark \ref{r:homeo}:

\begin{cor}\label{c:Hopfprob}
Let $M$ be an oriented circle bundle over a closed oriented aspherical manifold $N$ with hyperbolic fundamental group and $\dim(N)\neq3,4$.  Every self-map of $M$ of non-zero degree is either homotopic to a homeomorphism or homotopic to a non-trivial covering.
\end{cor}

Consider now the iterates
\[
f^m\colon M\longrightarrow M, \ m\geq1.
\]
By Proposition \ref{p:fibercover}, each $f^m$ is homotopic to a fiberwise covering of degree 
\[
(\deg(f))^m=[\pi_1(M):f^m_*(\pi_1(M))],
\]
i.e. for each $m$, the homomorphism
\[
f^m_*\colon\pi_1(M)\longrightarrow\pi_1(M) 
\]
maps every element $x\in C(\pi_1(M))=\Z=\langle z\rangle$ to $x^{\pm\deg(f^m)}\in C(\pi_1(M))$ and induces an isomorphism on $\pi_1(N)=\pi_*(\pi_1(M))$. In particular, when $\deg(f)>1$, we obtain the following:
\begin{cor}\label{c:sequence}
If $f\colon M\longrightarrow M$ has degree greater than one, then 
there is a purely decreasing infinite sequence of subgroups of $\pi_1(M)$ given by
\begin{equation}\label{eq.sequence}
\pi_1(M)\supsetneq f_*(\pi_1(M))\supsetneq\cdots\supsetneq f^m_*(\pi_1(M))\supsetneq f^{m+1}_*(\pi_1(M))\supsetneq\cdots.  
\end{equation}
\end{cor}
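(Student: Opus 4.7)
The plan is to derive the chain directly from Proposition \ref{p:fibercover} applied to every iterate $f^m$. The point is that each $f^m$ also has nonzero degree, namely $\deg(f^m) = (\deg f)^m$, and therefore Proposition \ref{p:fibercover} applies to $f^m$ as well. Consequently $f^m$ is homotopic to a (fiberwise) covering of $M$ of degree $(\deg f)^m$. Two facts follow at once: first, the induced homomorphism $f^m_*\colon \pi_1(M)\to\pi_1(M)$ is injective (covering maps are $\pi_1$-injective), and second,
\[
[\pi_1(M):f^m_*(\pi_1(M))] = (\deg f)^m.
\]

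Next I would record the obvious inclusion. Since $f^{m+1}_* = f^m_*\circ f_*$, we have
\[
f^{m+1}_*(\pi_1(M)) = f^m_*\bigl(f_*(\pi_1(M))\bigr) \subseteq f^m_*(\pi_1(M)),
\]
which gives the containments in (\ref{eq.sequence}). To promote these to strict containments, I would use the injectivity of $f^m_*$: restricted to its image, $f^m_*$ is a bijection $\pi_1(M)\to f^m_*(\pi_1(M))$ carrying $f_*(\pi_1(M))$ onto $f^{m+1}_*(\pi_1(M))$, so
\[
[f^m_*(\pi_1(M)):f^{m+1}_*(\pi_1(M))] = [\pi_1(M):f_*(\pi_1(M))] = \deg f > 1.
\]
Thus each inclusion in (\ref{eq.sequence}) is proper, yielding the desired purely decreasing infinite sequence.

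There is essentially no obstacle here: the corollary is a direct packaging of Proposition \ref{p:fibercover} together with the elementary observation that a strictly increasing sequence of indices forces strict inclusions. The only point that deserves a line of care is the identification of $[\pi_1(M):f^m_*(\pi_1(M))]$ with the degree $(\deg f)^m$, which relies precisely on $f^m$ being homotopic to a genuine covering map of $M$, as supplied by Proposition \ref{p:fibercover}.
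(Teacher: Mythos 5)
Your argument is correct and follows essentially the same route as the paper: the paper likewise applies Proposition \ref{p:fibercover} to each iterate $f^m$ to identify $[\pi_1(M):f^m_*(\pi_1(M))]$ with $(\deg f)^m$, from which the strictness of the inclusions is immediate. Your extra step deducing strictness via injectivity of $f^m_*$ and multiplicativity of the index is a slightly more explicit packaging of the same fact.
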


\section{Distinguishing between trivial and non-trivial bundle}

Now we will show that the existence of sequence (\ref{eq.sequence}) implies that $\pi_1(M)$ has a finite index subgroup which is isomorphic to a direct product and thus $M$ is virtually trivial. To this end, we will construct a presentation of $\pi_1(M)$ by a product of two infinite index subgroups.

\subsection{Groups infinite index presentable by products}

An infinite group $\Gamma$ is said to be {\em infinite index presentable by products} or {\em IIPP} if there exist two infinite subgroups $\Gamma_1,\Gamma_2\subset\Gamma$ that commute elementwise, such that $[\Gamma:\Gamma_i]=\infty$ for both $\Gamma_i$ and the multiplication homomorphism
\begin{equation*}\label{eq.IIPP}
\Gamma_1\times\Gamma_2\longrightarrow\Gamma
\end{equation*}
surjects onto a finite index subgroup of $\Gamma$.

The notion of groups IIPP was introduced in~\cite{NeoIIPP} in the study of maps of non-zero degree from direct products to aspherical manifolds with non-trivial center. The concept of groups presentable by products (i.e. without the constraint on the index) was introduced in~\cite{KL}. It is clear that when $\Gamma$ is a {\em reducible} group, that is, virtually a product of two infinite groups, then $\Gamma$ is IIPP. Thus, a natural problem is to determine when these two properties are equivalent. In general, they are not equivalent as shown in~\cite[Section 8]{NeoIIPP}, however their equivalence is achieved under certain assumptions:

\begin{thm}{\normalfont(\cite[Theorem D]{NeoIIPP}).}\label{t:IIPPequiv}
Suppose $\Gamma$ fits into a central extension
\[
1\longrightarrow C(\Gamma)\longrightarrow\Gamma\longrightarrow\Gamma/C(\Gamma)\longrightarrow1,
\]
where $\Gamma/C(\Gamma)$ is not presentable by products. Then $\Gamma$ is IIPP if and only if it is reducible.
\end{thm}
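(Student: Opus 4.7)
The easy direction (reducible $\Rightarrow$ IIPP) is immediate from the definitions: a finite-index decomposition $\Gamma'=A\times B$ with $A$, $B$ infinite exhibits two commuting infinite subgroups of $\Gamma$ whose product is $\Gamma'$, and each factor has infinite index in $\Gamma$ because the other factor is infinite. The content of the theorem is the converse, which I plan to prove in two stages.

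\textbf{Stage 1: reduce to the case where one witness is central.} Let $\Gamma_1,\Gamma_2\subseteq\Gamma$ witness IIPP and let $p\colon\Gamma\to\Gamma/C(\Gamma)$ denote the projection. Set $\overline{\Gamma_i}:=p(\Gamma_i)$; these commute elementwise, and $\overline{\Gamma_1}\cdot\overline{\Gamma_2}=p(\Gamma_1\cdot\Gamma_2)$ has finite index in $\Gamma/C(\Gamma)$. If both $\overline{\Gamma_i}$ were infinite, they would exhibit $\Gamma/C(\Gamma)$ as presentable by products, contradicting the hypothesis. Hence, after relabeling, $\overline{\Gamma_2}$ is finite. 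Setting $\Gamma_2':=\Gamma_2\cap C(\Gamma)$, this subgroup has finite index in $\Gamma_2$, is infinite, is central in $\Gamma$, and $\Gamma_1\cdot\Gamma_2'$ still has finite index in $\Gamma$. After replacing $\Gamma_2$ by $\Gamma_2'$, we may assume $\Gamma_2\subseteq C(\Gamma)$, and since $[\Gamma:\Gamma_2']\geq[\Gamma:\Gamma_2]=\infty$, the IIPP hypotheses persist.

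\textbf{Stage 2: promote the central product to a direct product.} Let $H:=\Gamma_1\cdot C(\Gamma)$ and $A:=\Gamma_1\cap C(\Gamma)$. Because $H\supseteq\Gamma_1\cdot\Gamma_2$, the subgroup $H$ has finite index in $\Gamma$. The multiplication map $\mu\colon\Gamma_1\times C(\Gamma)\to H$ is a surjective homomorphism with kernel the anti-diagonal $\{(a,a^{-1})\colon a\in A\}$; that is, $H$ is the central product of $\Gamma_1$ and $C(\Gamma)$ amalgamated over $A$. The hypothesis $[\Gamma:\Gamma_1]=\infty$ forces $[C(\Gamma):A]=\infty$.

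I expect the main obstacle to be this last step: promoting the central product to a genuine direct product on a finite-index subgroup when $A$ is nontrivial. In the setting where Theorem~\ref{t:IIPPequiv} is applied in this paper one has $C(\Gamma)=\Z$, so $[\Z:A]=\infty$ forces $A=0$, and hence $H=\Gamma_1\times C(\Gamma)$ is already a direct product of two infinite subgroups, witnessing reducibility of $\Gamma$. In the general statement one must choose a finite-index subgroup $C_0\subseteq C(\Gamma)$ together with a matching finite-index subgroup $\Gamma_1^0\subseteq\Gamma_1$ so that $\Gamma_1^0\cap C_0$ becomes trivial; handling the torsion and divisibility of $A$ inside the abelian group $C(\Gamma)$, which need not be finitely generated, is the delicate point of the argument.
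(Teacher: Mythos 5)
This statement is quoted from Theorem D of~\cite{NeoIIPP}; the paper under review contains no proof of it, so your attempt has to be judged on its own terms. The easy direction and your Stage 1 are correct and are the standard opening: push the IIPP witnesses $\Gamma_1,\Gamma_2$ down to $\Gamma/C(\Gamma)$, use non-presentability by products to conclude that one image, say $\overline{\Gamma_2}$, is finite, and replace $\Gamma_2$ by the infinite, central, finite-index subgroup $\Gamma_2\cap C(\Gamma)$. All the index bookkeeping there checks out (in particular $\Gamma_1\cdot(\Gamma_2\cap C(\Gamma))$ does have finite index in $\Gamma_1\cdot\Gamma_2$, being the image of a finite-index subgroup of $\Gamma_1\times\Gamma_2$ under the multiplication homomorphism).

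The genuine gap is in Stage 2, and you flag it yourself: you reach the central product $H=\Gamma_1\cdot C(\Gamma)$ amalgamated over $A=\Gamma_1\cap C(\Gamma)$ with $[C(\Gamma):A]=\infty$, but you only finish when $A$ is trivial. That special case does cover everything this paper actually uses --- here $C(\Gamma)=C(\pi_1(M))\cong\Z$, so $[\Z:A]=\infty$ forces $A=0$, and indeed Remark \ref{r:notPP} observes that the main argument of the paper never needs Theorem \ref{t:IIPPequiv} in full generality for precisely this reason. But as a proof of the theorem as stated the argument is incomplete, and the repair you sketch --- find finite-index subgroups $C_0\subseteq C(\Gamma)$ and $\Gamma_1^0\subseteq\Gamma_1$ with $\Gamma_1^0\cap C_0=1$ --- is not available in general: if $A$ is a nontrivial finite group sitting inside a divisible subgroup of $C(\Gamma)$ (say $A\cong\Z/2$ inside a copy of $\Q/\Z$), then every finite-index subgroup of $C(\Gamma)$ still contains $A$, and $A$ may equally lie in every finite-index subgroup of $\Gamma_1$, so no such pair exists and the central product cannot be split this way. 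Closing the general case needs a further structural input --- for instance, that $\Gamma_1/A\cong H/C(\Gamma)$ is a finite-index subgroup of $\Gamma/C(\Gamma)$ and hence itself not presentable by products, which forces $C(\Gamma_1)/A$ to be finite and gives some control on $A$ --- or else an explicit restriction to finitely generated $C(\Gamma)$, which is all the present application requires. As written, you have proved the case needed here, not the theorem.
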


The following theorem characterizes aspherical circle bundles, when the fundamental group of the base is not presentable by products:

\begin{thm}{\normalfont(\cite[Theorem C]{NeoIIPP}).}\label{t:char}
 Let $M \stackrel{\pi}\longrightarrow N$ be a circle bundle over a closed aspherical manifold $N$ whose fundamental group $\pi_1(N)$ is not presentable by
products. Then the following are equivalent:
 \begin{itemize}
 \item[(1)] $M$ admits a map of non-zero degree from a direct product;
 \item[(2)] $M$ is finitely covered by a product $S^1 \times \overline{N}$, for some finite cover $\overline{N} \longrightarrow N$;
 \item[(3)] $\pi_1(M)$ is reducible;
 \item[(4)] $\pi_1(M)$ is IIPP.
 \end{itemize}
\end{thm}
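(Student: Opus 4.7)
The plan is to close the cycle $(2) \Rightarrow (1) \Rightarrow (4) \Rightarrow (3) \Rightarrow (2)$. The implication $(3) \Rightarrow (4)$ is immediate: the two infinite factors of a virtual product decomposition of $\pi_1(M)$ commute elementwise, and each factor has infinite index in $\pi_1(M)$ because the other is infinite. The implication $(2) \Rightarrow (1)$ is obvious, since the covering projection $S^1 \times \overline{N} \to M$ is already a nonzero-degree map from a product. For $(4) \Rightarrow (3)$ I would apply Theorem \ref{t:IIPPequiv} directly with $\Gamma = \pi_1(M)$: the central extension $1 \to \Z \to \pi_1(M) \to \pi_1(N) \to 1$ has quotient $\pi_1(N)$, which by hypothesis is not presentable by products, so IIPP is equivalent to reducibility.

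The core content is $(3) \Rightarrow (2)$. Suppose a finite-index subgroup of $\pi_1(M)$ decomposes as $\Gamma_1 \times \Gamma_2$ with both factors infinite, and let $\overline{M} \to M$ be the corresponding finite cover. By Lemma \ref{l:cover}, $\overline{M}$ is a circle bundle over a finite cover $\overline{N}$ of $N$ with $C(\pi_1(\overline{M})) = \Z$. Since the center of a direct product is the product of centers and $\Z$ is indecomposable, I may assume $C(\Gamma_1) = \Z$ and $C(\Gamma_2) = 1$. Quotienting by the center yields $\pi_1(\overline{N}) \cong (\Gamma_1/\Z) \times \Gamma_2$. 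Non-presentability by products passes to finite-index subgroups (a commuting infinite pair in the subgroup already constitutes one in the ambient group, with the same finite-index product), so $\pi_1(\overline{N})$ is also not presentable by products, which forces one of $\Gamma_1/\Z$, $\Gamma_2$ to be finite. Since $\pi_1(M)$ is torsion-free, this cannot be $\Gamma_2$; hence $\Gamma_1$ is virtually $\Z$ and, being torsion-free, is isomorphic to $\Z$. The central extension for $\overline{M}$ therefore splits, the Euler class of $\overline{M} \to \overline{N}$ vanishes, and $\overline{M} \cong S^1 \times \overline{N}$.

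For $(1) \Rightarrow (4)$ I would set $A_i := f_*(\pi_1(X_i))$ for a map $f \colon X_1 \times X_2 \to M$ of nonzero degree. These subgroups commute elementwise and $A_1 A_2$ has finite index in $\pi_1(M)$. Asphericity of $M$ together with torsion-freeness forces both $A_i$ to be infinite, for otherwise $f$ would factor up to homotopy through a projection $X_1 \times X_2 \to X_{3-i}$ and have zero degree. Passing to images in $\pi_1(N)$, the commuting pair $\pi_*(A_1), \pi_*(A_2)$ still has finite-index product, and non-presentability by products forces, after reindexing, $\pi_*(A_1) = 1$; thus $A_1 \subset \Z$, which has infinite index in $\pi_1(M)$.

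The main obstacle I expect is arranging that $A_2$ also has infinite index, since a priori $A_2$ could exhaust $\pi_1(M)$ up to finite index. I anticipate resolving this either by passing to a finite cover in which the $\Z$-direction can be explicitly split off, or by replacing $A_2$ with its intersection with a suitable proper finite-index subgroup chosen so that the resulting pair still multiplies to a finite-index subgroup while both factors acquire infinite index. A secondary but essential point, used freely above, is that non-presentability by products is inherited by finite-index subgroups and, dually, survives finite quotients of central subgroups, so that the various passages to covers and centerless quotients preserve the hypothesis on $\pi_1(N)$.
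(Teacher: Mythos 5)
First, note that the paper does not actually prove this statement: Theorem \ref{t:char} is imported verbatim from \cite[Theorem C]{NeoIIPP}, so there is no internal proof to compare against; I can only assess your argument on its own terms. Three of your four implications are sound. $(2)\Rightarrow(1)$ and $(3)\Rightarrow(4)$ are indeed immediate; $(4)\Rightarrow(3)$ is correctly delegated to Theorem \ref{t:IIPPequiv}, once you add the small observation that ``not presentable by products'' plus torsion-freeness forces $C(\pi_1(N))=1$, so that $C(\pi_1(M))$ really is the fiber $\Z$ and the quotient in the central extension really is $\pi_1(N)$. Your $(3)\Rightarrow(2)$ is a clean, self-contained argument: the indecomposability of $\Z$, heredity of non-presentability under passage to finite-index subgroups, and the identification of the extension class with the Euler class via asphericity are all used correctly.

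The genuine gap is exactly where you suspect it, in $(1)\Rightarrow(4)$, and it is not a technicality that your two proposed fixes can absorb. Intersecting $A_2$ with a finite-index subgroup leaves it of finite index, so the second fix cannot produce infinite index; and ``passing to a finite cover in which the $\Z$-direction splits off'' is precisely conclusion $(2)$, so the first fix is circular. Moreover, the purely group-theoretic data you have extracted cannot close the gap in principle: for a \emph{non-trivial} circle bundle $M$ over a hyperbolic surface $\Sigma_h$ one can choose a surjection $\pi_1(\Sigma_g)\twoheadrightarrow\pi_1(M)$ for $g$ large and realize it, together with the inclusion of the fiber, by a map $S^1\times\Sigma_g\longrightarrow M$; this exhibits commuting subgroups $A_1\subset\Z$ infinite and $A_2=\pi_1(M)$ of index one, yet $M$ satisfies none of $(2)$--$(4)$. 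So the hypothesis $\deg(f)\neq 0$ must enter homologically, not just through the finite-index condition on $f_*$. The missing ingredient is the Kotschick--L\"oh fundamental-class argument: nonzero degree forces the image of $[X_1]$ in $H_{\dim X_1}(BA_1;\Q)=H_{\dim X_1}(S^1;\Q)$ to be non-trivial, whence $\dim X_1=1$ and $X_1\simeq S^1$, and one then analyses the resulting domination $S^1\times X_2\longrightarrow M$ (using the decomposition of $\deg(f)\cdot[M]$ as a product of classes from $A_1$ and $A_2$) to produce a virtual complement to the center -- i.e.\ to prove $(2)$ or $(3)$ directly rather than $(4)$. This is the actual content of \cite[Theorem C]{NeoIIPP}, and without it your cycle does not close.
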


Since non-elementary hyperbolic groups are not presentable by products~\cite{KL}, each circle bundle $M$ over a closed aspherical manifold $N$ with $\pi_1(N)$ hyperbolic fulfills the assumptions of Theorems \ref{t:IIPPequiv} and \ref{t:char}. Using this, we will be able to deduce that $M$ is virtually trivial. Furthermore, our presentation by products for $\pi_1(M)$ will have trivial kernel; 
see Remark \ref{r:notPP}. 

\subsection{An infinite index presentation by products}

Under the assumption of the existence of $f^m\colon M\longrightarrow M$ with $\deg(f^m)=(\deg(f))^m>1$ for all $m\geq 1$, and thus of sequence (\ref{eq.sequence}), we consider the subgroup of $\pi_1(M)$ defined by
\begin{equation*}
G:=\mathop{\cap}_{m}f^m_*(\pi_1(M)).
\end{equation*}

First, we observe the general fact (i.e. without using the specific form of each $f^m_*(\pi_1(M))$) that $G$ has infinite index in $\pi_1(M)$.
Let us suppose the contrary, i.e. that $[\pi_1(M):G]<\infty$. Then by 
\[
[\pi_1(M):f^m_*(\pi_1(M))]\leq[\pi_1(M):G]
\]
for all $m$, and the fact that $\pi_1(M)$ contains only finitely many subgroups of a fixed index, we deduce that there exists $n$ such that $f^n_*(\pi_1(M))=f^k_*(\pi_1(M))$ for all $k\geq n$. This is however impossible by Corollary \ref{c:sequence}. 
Now, we will show that $\pi_1(M)$ admits a presentation by the product $C(\pi_1(M))\times G$. Let
\begin{equation}\label{eq.presentation}
\varphi\colon C(\pi_1(M))\times G\longrightarrow \pi_1(M)
\end{equation}
be the multiplication map. Since each element of $C(\pi_1(M))$ commutes with every element of $G$, we deduce that $\varphi$ is in fact a well-defined homomorphism. 

We claim that $\varphi$ surjects onto a finite index subgroup of $\pi_1(M)$, i.e. that $C(\pi_1(M))G$ has finite index in $\pi_1(M)$. To this end, we will use the specific description of $f^m$ and $f^m_*(\pi_1(M))$. 
In Section \ref{covers}, we have seen that for every $m$, the composite $f^m$ is a fiberwise covering of degree $\deg(f^m)$ on the fibers that induces an isomorphism on $\pi_1(N)$, and even more it induces a homotopy equivalence of $N$. In particular, for every $m\geq1$ we obtain a short exact sequence
\[
1\longrightarrow\langle z^{\deg(f)^m}\rangle\longrightarrow f^m_*(\pi_1(M))\stackrel{}\longrightarrow\pi_1(N_m)\longrightarrow1,
\]
where $\pi_1(N_m)\cong\pi_1(N)$. Hence, $\pi_1(M)/f^m_*(\pi_1(M))\cong\Z/\deg(f)^m\Z$, for all $m\geq1$, and so $\pi_1(M)/G\cong\Z$. Thus, we obtain a short exact sequence (induced by $\pi_*$)
\[
1\longrightarrow(C(\pi_1(M))G)/G\longrightarrow\pi_1(M)/G\stackrel{\overline{\pi}_*}\longrightarrow\pi_1(N)/\pi_*(G)\longrightarrow1.
\]
Since $(C(\pi_1(M))G)/G\cong\Z$, we conclude that $\pi_*(G)$ is a finite index subgroup of $\pi_1(N)$.

Let now $x\in\pi_1(M)$. If $x=z^s\in C(\pi_1(M))=\langle z\rangle$, then $\varphi(x,1)=x$. If $x\notin C(\pi_1(M))$, then $\pi_*(x)$ is not trivial in $\pi_1(N)$ and so there exists $t\geq 0$ such that $\pi_*(x^t)\in\pi_*(G)$, i.e. $\pi_*(x^t)=\pi_*(g)$ for some $g\in G$. Thus $x^t=z^{a}g$ for some $a\in\Z$, and so $\varphi(z^a,g)=x^t$. We conclude that $\varphi(C(\pi_1(M))\times G)=C(\pi_1(M))G$ has finite index in $\pi_1(M)$.

Since moreover $C(\pi_1(M))$ and $G$ have infinite index in $\pi_1(M)$, we conclude that the presentation given in (\ref{eq.presentation}) is an infinite index presentation by products. Theorem \ref{t:char} implies that $\pi_1(M)$ is reducible and $M$ is virtually a trivial circle bundle. 

This finishes the proof of Theorem \ref{t:main}.

\begin{rem}\label{r:notPP}
The kernel of $\varphi$ must be trivial, because it is isomorphic to $C(\pi_1(M))\cap G$ which is trivial. 
Thus $C(\pi_1(M))G$ is isomorphic to the fundamental group of a trivial circle bundle that covers $M$. In particular, the property of $\pi_1(N)$ that is not presentable by products was not necessary for our proof.

An alternative way to see that $C(\pi_1(M))\cap G$ is trivial is to observe that
\[
[C(\pi_1(M)):C(\pi_1(M))\cap G]=[\pi_1(M):G]=\infty.
\]
Together with the fact that $C(\pi_1(M))=\Z$, we conclude that $C(\pi_1(M))\cap G$ is trivial. 
\end{rem}

The proof of Corollary \ref{c:Hopf} is now straightforward:

\begin{proof}[Proof of Corollary \ref{c:Hopf}]
Let $M$ be a circle bundle over a closed oriented aspherical manifold $N$ with $\pi_1(N)$ hyperbolic and $f\colon M\longrightarrow M$ be a map of non-zero degree. As we have seen in Section \ref{covers}, if $\deg(f)=\pm1$, then $f$ is a homotopy equivalence and, if $\deg(f)\neq\pm1$, then $f$ is homotopic to a non-trivial fiberwise covering (and to a non-trivial covering when $\dim(N)\neq3,4$; see Remark \ref{r:homeo}). In the latter case, Theorem \ref{t:main} implies moreover that $M$ is virtually $S^1\times \overline N$ for some finite covering $\overline N\to N$. 
\end{proof}

\begin{rem}
When $M$ has torsion Euler class $e\in H^2(N)$, we have seen in Example \ref{e:torsion} that $M$ admits a self-map $f$ of degree greater than one. Recall that a product finite covering  $S^1\times \overline N \longrightarrow M$ is obtained by pulling back  $M\stackrel{\pi}\longrightarrow N$ along the finite covering $\overline N\to N$ that corresponds to the finite index subgroup
\[
H:=\ker(\pi_1(N)\stackrel{h}\longrightarrow H_1(N)\stackrel{\pi_T}\longrightarrow  \mathrm{Tor} H_1(N)) \subseteq \pi_1(N),
\]
where $h$ denotes the Hurewicz map and $\pi_T$ is the projection to the torsion of $H_1(N)$. (Note that $e$ lies in $\mathrm{Tor} H_1(M)$ by the Universal Coefficient Theorem.)
The groups $H$ and $\pi_*(G)$ are commensurable in $\pi_1(N)$ because
\[
[\pi_*(G):\pi_*(G)\cap H]\leq[\pi_1(N):H]<\infty \ \text{and} \ [H:\pi_*(G)\cap H]\leq[\pi_1(N):\pi_*(G)]<\infty.
\]
\end{rem}

\section{The Hopf property and the strong version of Hopf's problem}\label{s:Hopfdiscussion}

In this section we discuss the Hopf property for circle bundles over aspherical manifolds with hyperbolic fundamental groups and Problem \ref{Hopfstrong} more generally. 

\subsection{The Hopf property}

First, we show that the fundamental groups of circle bundles over aspherical manifolds with hyperbolic fundamental groups are Hopfian: 

\begin{prop}\label{p:Hopfproperty1}
If $M$ is a circle bundle over a closed oriented aspherical manifold with hyperbolic fundamental group, then every finite index subgroup of $\pi_1(M)$ is Hopfian.
\end{prop}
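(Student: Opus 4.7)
The first step is to reduce the statement to showing that $\pi_1(M)$ itself is Hopfian. Given a finite index subgroup $H \leq \pi_1(M)$, let $\overline{M}$ be the corresponding finite cover. By Lemma \ref{l:cover}, $\overline{M}$ is itself a circle bundle over a finite cover $\overline{N}$ of $N$, and $\pi_1(\overline{N})$, being a finite index subgroup of the hyperbolic group $\pi_1(N)$, is again hyperbolic. So if we can prove that the fundamental group of every circle bundle in this class is Hopfian, the general statement follows by applying the result to $\pi_1(\overline{M}) = H$.

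The second step is to analyze a surjective endomorphism $\phi \colon \pi_1(M)\to\pi_1(M)$ by means of the central extension
\[
1\longrightarrow C(\pi_1(M))=\Z \longrightarrow \pi_1(M) \stackrel{\pi_*}\longrightarrow \pi_1(N)\longrightarrow 1.
\]
Since $C(\pi_1(M))$ is a characteristic subgroup, $\phi$ restricts to an endomorphism $\phi|_C \colon \Z\to\Z$ and descends to a surjective endomorphism $\bar\phi\colon \pi_1(N)\to\pi_1(N)$. Because torsion-free hyperbolic groups are Hopfian by Sela's theorem (as already invoked in the proof of Proposition \ref{p:mappinghyperbolic}), $\bar\phi$ is an isomorphism.

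The third and, I expect, the main step is to rule out the possibility that $\phi$ kills the center. If $\phi|_C=0$, then $\phi$ factors as $\pi_1(M)\stackrel{\pi_*}\to \pi_1(N)\stackrel{\tilde\phi}\to \pi_1(M)$, and surjectivity of $\phi$ forces $\tilde\phi$ to be surjective. Postcomposing $\tilde\phi$ with $\pi_*$ recovers $\bar\phi$, which is an isomorphism, so $\tilde\phi$ must also be injective. But then $\tilde\phi$ is an isomorphism between $\pi_1(N)$, which has trivial center (being non-elementary hyperbolic), and $\pi_1(M)$, whose center is infinite cyclic — a contradiction. Hence $\phi|_C$ is a nontrivial, and therefore injective, endomorphism of $\Z$.

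The final step is a straightforward diagram chase. For any $x\in\ker\phi$, one has $\bar\phi(\pi_*(x))=1$, so $\pi_*(x)=1$ (since $\bar\phi$ is injective), whence $x\in C(\pi_1(M))$; then $\phi|_C(x)=1$ together with the injectivity of $\phi|_C$ gives $x=1$. Therefore $\phi$ is injective, and being surjective by hypothesis, it is an isomorphism, proving that $\pi_1(M)$ is Hopfian. The main obstacle, as noted, is the elimination of the case $\phi|_C=0$, which crucially uses both the Hopfian property of $\pi_1(N)$ and the fact that the hyperbolicity of $\pi_1(N)$ forces it to have trivial center; the rest of the argument is a routine application of the five lemma philosophy to the central extension.
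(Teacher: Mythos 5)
Your proof is correct and follows essentially the same route as the paper: reduce to $\pi_1(M)$ via Lemma \ref{l:cover}, push a surjective endomorphism through the central extension, and use the Hopf property of the torsion-free hyperbolic quotient. The only divergence is in handling the center: the paper notes that injectivity of $\bar\phi$ forces $\phi\vert_{C}$ to be \emph{surjective} (hence an isomorphism, $\Z$ being Hopfian) and concludes by the five lemma, whereas you take the slightly longer detour of excluding $\phi\vert_C=0$ by comparing centers of $\pi_1(N)$ and $\pi_1(M)$ and then running the kernel chase by hand --- both are valid.
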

\begin{proof}
Let $M\stackrel{\pi}\longrightarrow N$ be a circle bundle, where $N$ is a closed oriented aspherical manifold with $\pi_1(N)$ hyperbolic. (As before, we can assume that $\pi_1(N)$ is not cyclic.) Since every finite covering of $M$ is of the same type (cf. Lemma \ref{l:cover}), it suffices to show that $\pi_1(M)$ is Hopfian. 

Let $\phi\colon\pi_1(M)\longrightarrow\pi_1(M)$ be a surjective homomorphism. Then $\phi(C(\pi_1(M)))\subseteq C(\pi_1(M))$, and so the composite homomorphism
$\pi_*\circ\phi\colon \pi_1(M)\longrightarrow\pi_1(N)$ maps $C(\pi_1(M))$ to the trivial element of $\pi_1(N)$. In particular, there exists a surjective homomorphism
$\overline{\phi}\colon\pi_1(N)\longrightarrow \pi_1(N)$ such that $\overline{\phi}\circ\pi_*=\pi_*\circ\phi$. Now $\overline{\phi}$ is injective as well (and so an isomorphism), because $\pi_1(N)$ is Hopfian, being hyperbolic and torsion-free~\cite{Ma,Sela}. 
Then, using again the surjectivity of $\phi$, we deduce that 
\[
\phi\vert_{C(\pi_1(M))}\colon C(\pi_1(M))\longrightarrow C(\pi_1(M))
\]
is also surjective. Since $C(\pi_1(M))=\Z$ is Hopfian, we conclude that $\phi\vert_{C(\pi_1(M))}$ is in fact an isomorphism. 
\begin{figure}
     \[
\xymatrix{
1 \ar[r]^{} &  C(\pi_1(M)) \ar[d]^{\phi\vert_{C(\pi_1(M))}} \ar[r]^{} &   \pi_1(M) \ar[d]^{\phi} \ar[r]^{\pi_*} & \pi_1(N)\ar[d]^{\overline{\phi}} \ar[r]^{} & 1 \\
1 \ar[r]^{} &C(\pi_1(M)) \ar[r]^{}  &    \pi_1(M)\ar[r]^{\pi_*} &  \pi_1(N) \ar[r]^{} & 1\\
}
    \]
\caption{\small The Hopf property for $\pi_1(M)$.}
\label{f:Hopf}
 \end{figure}
Now, the five-lemma for the commutative diagram in Figure \ref{f:Hopf} implies that $\phi$ is an isomorphism as well.
\end{proof}
 
In this way, we obtain also an alternative proof of the fact that every self-map of $M$ of degree $\pm 1$ is a homotopy equivalence. Of course, the above group theoretic argument uses the same line of argument as the proof of Proposition \ref{p:fibercover}, with the difference that it starts with a stronger assumption, namely that $\phi$ is surjective.

\subsection{Infinite decreasing sequences and Problem \ref{Hopfstrong}}

The fact that every finite index subgroup of the fundamental group of a circle bundle over an aspherical manifold $N$ with hyperbolic $\pi_1(N)$ has the Hopf property is actually conjectured to be true for all aspherical manifolds. Beyond that this would immediately verify Problem \ref{Hopfprob} for every aspherical manifold, it also gives evidence for an affirmative answer to Problem \ref{Hopfstrong}. Namely, let $f\colon M\longrightarrow M$ be a map of degree $\deg(f)>1$ and suppose that every finite index subgroup of $\pi_1(M)$ is Hopfian. Then, as in the case of non-trivial coverings, there is a purely decreasing infinite sequence 
\begin{equation*}
\pi_1(M)\supsetneq f_*(\pi_1(M))\supsetneq\cdots\supsetneq f^m_*(\pi_1(M))\supsetneq f^{m+1}_*(\pi_1(M))\supsetneq\cdots.  
\end{equation*}
The proof of this claim can be found along the lines of the proof of Theorem 14.40 of~\cite{Lue}, but let us give the details for completeness:
Suppose the contrary, i.e. that there is some $n$ such that $f^n_*(\pi_1(M))=f^k_*(\pi_1(M))$ for all $k\geq n$. Let $M_n\stackrel{p_n}\longrightarrow M$ be the finite covering of $M$ corresponding to $f^n_*(\pi_1(M))$ and denote by $\overline{f^n}\colon M\longrightarrow M_n$ the lift of $f^n$, which induces a surjection on $\pi_1$. Since $f^n_*(\pi_1(M))=f^{2n}_*(\pi_1(M))$, we deduce that the composite map
$\overline{f^n}\circ p_n\colon M_n\longrightarrow M_n$ induces a surjection
\[
(\overline{f^n}\circ p_n)_*\colon \pi_1(M_n)\longrightarrow \pi_1(M_n).
\]
Since $\pi_1(M_n)$ is Hopfian, we deduce that $(\overline{f^n}\circ p_n)_*$ is an isomorphism, and so a homotopy equivalence, because $M_n$ is aspherical. In particular,  we obtain
\[
\deg(\overline{f^n}),\deg(p_n)\in\{\pm 1\}, 
\]
which leads to the absurd conclusion that $\deg(f)=\pm 1$. 

\bibliographystyle{amsplain}

\end{document}